\documentclass{birkjour}

\usepackage[utf8]{inputenc}
\usepackage[english]{babel}
\usepackage{soul}
\usepackage{amsfonts}
\usepackage{amsmath}
\usepackage{amsthm}
\usepackage{amssymb}
\usepackage{mathrsfs}
\usepackage{extarrows}

\theoremstyle{definition}
\newtheorem{defin}{Definition}[section]
\theoremstyle{plain}
\newtheorem{teor}[defin]{Theorem}
\newtheorem*{K1}{Kato's first representation theorem}
\newtheorem*{K2}{Kato's second representation theorem}
\newtheorem{lem}[defin]{Lemma}
\newtheorem{pro}[defin]{Proposition}
\newtheorem{cor}[defin]{Corollary}
\theoremstyle{definition}
\newtheorem{esm}[defin]{Example}
\newtheorem{osr}[defin]{Remark}
\numberwithin{equation}{section}

%% ABBREVIAZIONI %%
\renewcommand{\O}{\Omega}
\newcommand{\D}{\mathcal{D}}
\renewcommand{\H}{\mathcal{H}}

\newcommand{\Set}{\mathcal{S}}
\newcommand{\B}{\mathcal{B}}
\newcommand{\Ee}{\mathcal{E}}
\newcommand{\Eo}{\mathcal{E}_\Omega}
\newcommand{\rn}{\mathfrak{n}}

\newcommand{\Up}{\Upsilon}
\renewcommand{\L}{\Lambda}
\newcommand{\Po}{\mathfrak{P}}
\newcommand{\n}[1]{\|#1\|}
\newcommand{\nor}{\|\cdot\|}

\newcommand{\noo}{\|\cdot\|_\O}

\renewcommand{\l}{\langle}
\renewcommand{\r}{\rangle}
\newcommand{\N}{\mathbb{N}}
\newcommand{\R}{\mathbb{R}}
\newcommand{\C}{\mathbb{C}}

\newcommand{\pint}{\l\cdot,\cdot\r}
\newcommand{\pin}[2]{\l#1 , #2\r}
\newcommand{\no}{\noindent}
\newcommand{\ol}{\overline}
\newcommand{\ull}{\underline} 

\newcommand{\Ma}{\mathscr{M}_{\ull{\alpha}}}
\newcommand{\Oa}{\O_{\ull{\alpha}}}

\newcommand{\sub}{\subseteq}

\newcommand{\mez}{\frac{1}{2}}
\renewcommand{\ll}{{\it l}}

\begin{document}

\title[A Survey on Solvable Sesquilinear Forms]{A Survey on Solvable Sesquilinear Forms}

\author[R. Corso]{Rosario Corso}

\address{%
	Dipartimento di Matematica e Informatica\\
	Università degli Studi di Palermo\\
	Via Archirafi 34\\
	I-90123 Palermo\\
	Italy}

\email{rosario.corso@studium.unict.it}

\begin{abstract}
	The aim of this paper is to present a unified theory of many Kato type representation theorems in terms of solvable forms on Hilbert spaces. In particular, for some sesquilinear forms $\O$ on a dense domain $\D$ one looks for an expression
	$$
	\O(\xi,\eta)=\pin{T\xi}{\eta}, \qquad \forall \xi\in D(T),\eta \in \D,
	$$
	where $T$ is a densely defined closed operator with domain $D(T)\subseteq \D$. \\
	There are two characteristic aspects of solvable forms. Namely, one is that the domain of the form can be turned into a reflexive Banach space need not be a Hilbert space. The second one is the existence of a perturbation with a bounded form which is not necessarily a multiple of the inner product.
\end{abstract}

\keywords{Kato's representation theorems, q-closed/solvable sesquilinear forms.}

\subjclass{Primary 47A07; Secondary 47A10, 47A12}

% 47B25  	Symmetric and selfadjoint operators (unbounded)
% 47A07  	Forms (bilinear, sesquilinear, multilinear)
% 47A10  	Spectrum, resolvent
% 47A12  	Numerical range, numerical radius
% 47A30  	Norms (inequalities, more than one norm, etc.)
% 47A67  	Representation theory
% 47B44  	Accretive operators, dissipative operators, etc.
% 47B50 (1973-now) Operators on spaces with an indefinite metric
% 46C50 (1991-now) Generalizations of inner products (semi-inner products, partial inner products, etc.)

\maketitle

\section{Introduction}

Let $\H$ be a Hilbert space with inner product $\pint$. The notions of bounded operators and bounded sesquilinear forms are closely related by the formula 
\begin{equation}
\label{bound}
\O(\xi,\eta)=\pin{T\xi}{\eta}, \qquad  \forall\xi,\eta \in \H.
\end{equation}
Expression (\ref{bound}) holds for every bounded sesquilinear form $\O$ and for some bounded operator by Riesz's classical representation theorem.\\
The situation in the unbounded case is more complicated. One of the earliest result of this topic is formulated by Kato in \cite{Kato}.
\begin{K1}
	Let $\O$ be a densely defined closed sectorial form with domain $\D\sub \H$. Then there exists a unique m-sectorial operator $T$, with domain $D(T)\sub \D$, such that
	\begin{equation}
	\label{Kato1}
	\O(\xi,\eta)=\pin{T\xi}{\eta}, \qquad \forall \xi\in D(T),\eta \in \D.
	\end{equation}
\end{K1}
\no Here there are some differences compared to the bounded case. For example, representation (\ref{Kato1}) does not necessarily hold on the whole $\D$, because in general $D(T)$ is smaller that $\D$. However, $D(T)$ is not a 'small' subspace since it is dense in $\H$. It is worth mentioning that an expression like (\ref{Kato1}) can be given for any sesquilinear form $\O$ considering the operator defined by
\begin{align}
\label{eq_dom_intro1}
D(T)=\{\xi \in \D:\exists \chi \in \H, \O(\xi,\eta)=\pin{\chi}{\eta}, \forall \eta \in \D\}
\end{align}
and $T\xi=\chi$, for all $\xi\in D(T)$ and $\chi$ as in (\ref{eq_dom_intro1}). %Then clearly we have the expression $\O(\xi,\eta)=\pin{T\xi}{\eta}$, for all $\xi\in D(T),\eta \in \D$, that we call the {\it representation} (of first type) of $\O$. 
Note that $T$, the operator {\it associated} to $\O$, is the maximal operator that satisfies such an expression. However, one usually is looking for some properties of $T$ concerning closedness or resolvent set, like in Kato's theorem. 

A bijection between densely defined closed sectorial forms and their associated operators (i.e. m-sectorial operators) is valid. But this bijection is not preserved when we consider a larger class of sesquilinear forms. Indeed, there exists many sesquilinear forms with the same associated operator (see Proposition 4.2 of \cite{GKMV}).
Although in the unbounded case the representation on the whole domain and the correspondence between forms and operators are lost we have the following strong result (see \cite{Kato}).
\begin{K2}
	\label{th2_Kato}
	Let $\O$ be a densely defined closed non-negative sesquilinear form  with domain $\D$ and $T$ be its positive self-adjoint associated operator. Then $\D=D(T^\mez)$ and 
	\begin{equation}
	\label{Kato2}
	\O(\xi,\eta)=\pin{T^\mez\xi}{T^\mez\eta}, \qquad \forall \xi,\eta \in \D.
	\end{equation}
\end{K2}

\no We stress that in (\ref{Kato2}) the representation is well-defined in $\D$, which is also the domain of a positive self-adjoint operator. Nevertheless, this last theorem does not have direct generalizations without the condition of positivity. Indeed, Example 2.11, Proposition 4.2 of \cite{GKMV} and Example 5.4 of \cite{FHdeS} show sesquilinear forms that satisfy the first type of representation but not the second type.

Kato's theorems lead to several applications; for instance, a way to define the Friedrichs extension of densely defined sectorial operators \cite[Ch VI.2.3]{Kato}, %a proof of the polar decomposition theorem \cite[Theorem VIII.32]{ReedSimon} 
a proof of Von Neumann's theorem about the operator $T^*T$ when $T$ is densely defined and closed \cite[Example VI.2.13]{Kato}
and a way to prove that some operators are m-sectorial or self-adjoint (see \cite[Ch. VI]{Kato} and also \cite{FHdeSW'} for some generalizations). 
There are cases where it is simpler to handle forms rather than operators. Indeed, the sum of two operators might be defined in a small subspace, but with closed forms one can define a special sum that has a dense domain (see \cite{ReedSimon} for the concrete example of the so-called {\it form sum} of the operators $Af=-f''$ and $\delta f=f(0)$ with $f\in C_0^\infty (\R)$).
%$-\frac{d^2}{dx^2}$
%In general, it is simpler to handle closed forms with the same domain rather than the associated operators. For example, the sum of the latter might be defined in a small subspace, but the {\it form sum} \cite[Ch VI.2.5]{Kato} of the operators (which is defined exactly with closed form) is densely defined.

Recently, the first representation theorem has been generalized in the context of {\it q-closed} and {\it solvable forms} in \cite{Tp_DB} and, successively, in \cite{RC_CT} (see Theorem \ref{th_rapp_risol} below). While the second one has been extended to solvable forms in \cite{Second} (see Theorem \ref{2_repr_th_2} below). Solvable forms constitute a unified theory of many representation theorems (for example \cite{FHdeS,GKMV,Lions,McIntosh68,McIntosh70,Schmitz,Schm}). %For an approach for sesquilinear forms defined on non-dense subspaces see \cite{Arendt}.
 
\no The new aspects of solvable forms, compared to the ones in the works mentioned above, are the following (see Definitions \ref{def_q_chiusa} and \ref{def_solv}). First, the structure of {\it reflexive Banach space} need not be a Hilbert space on the domain of the form. Second, the perturbation with a {\it bounded form} which is not necessarily a multiple of the inner product. These conditions are stressed in Example 7.3 of \cite{RC_CT} and Example \ref{esm_3} in Section \ref{sec:rep_th}, respectively.

This paper is organized as follows. 
In Section \ref{sec:rep_th} we give the definitions of solvable forms and their representation theorems. We show in Section \ref{sec:num} some properties of these forms in terms of the numerical range. Section \ref{sec:spec} provides an exposition of particular cases of solvable forms known in the literature. In the final section we discuss another representation called {\it Radon-Nikodym-like}.

%\section{Notations}
%\label{sec:not}

\section{The representation theorems}
\label{sec:rep_th}

Throughout this paper we will use the following notations: $\H$ is a Hilbert space with inner product $\pint$ and norm $\nor$; $\D$ is a dense subspace of $\H$; $D(T), R(T)$ and $\rho(T)$ are the {\it domain}, {\it range} and {\it resolvent set} of an operator $T$ on $\H$, respectively; $\B(\H)$ is the set of {\it bounded} operators defined everywhere on $\H$; $\Re B$ and $\Im B$ are the {\it real} and {\it imaginary parts} of an operator $B\in \B(\H)$, respectively; 
$$\rn_T:=\{\pin{T\xi}{\xi}:\xi\in D(T),\n{\xi}=1\}$$ is the {\it numerical range}  of $T$; $l_p$ with $p>1$ is the classic Banach space with the usual norm.
%
%\begin{itemize}
%	\item $\H$ is a Hilbert space, with inner product $\pint$ and norm $\nor$;
%	\item $\D$ is a dense subspace of $\H$;
%	\item $D(T), R(T)$ and $\rho(T)$ are the domain, range and resolvent set of an operator $T$ on $\H$, respectively;
%	\item $\B(\H)$ is the set of bounded operators defined everywhere on $\H$;
%	\item $\Re B$ and $\Im B$ are the real and imaginary parts of an operator $B\in \B(\H)$, respectively;
%	\item $\rn_T$ is the numerical range $\{\pin{T\xi}{\xi}:\xi\in D(T),\n{\xi}=1\}$ of $T$;
%	\item $l^p$, with $p>1$ are the classic Banach spaces with the usual norms.
%\end{itemize}

We will consider sesquilinear forms defined on $\D$, i.e., maps $\D\times \D\to \C$ which are linear in the first component and anti-linear in the second one.

\no  If $\O$ is a sesquilinear form defined on $\D$, then the {\it adjoint form} $\O^*$ of $\O$ is given by
$\O^*(\xi,\eta)=\ol{\O(\eta,\xi)}$, for all $\xi,\eta \in \D.$
The {\it real} and {\it imaginary} parts $\Re \O$ and $\Im \O$ are
$\Re \O=\frac{1}{2}(\O+\O^*) $ and $ \Im \O=\frac{1}{2i}(\O-\O^*)$, respectively.
The {\it numerical range}  of $\O$ is $$\rn_\O:=\{\O(\xi,\xi):\xi\in \D, \n{\xi}=1\}.$$
 $\O$ is said to be {\it symmetric} if $\O=\O^*$ (i.e., $\rn_\O \subseteq\R$) and, in particular, $\O$ is {\it non-negative} if $\rn_\O \subseteq [0,+\infty)$. We will denote by $\iota$ the sesquilinear form  $\iota(\xi,\eta)=\pin{\xi}{\eta}$,  $\xi,\eta \in \H$ and by $\vartheta$ the null form on $\H$.\\

%Finally, we will use the abbreviation 'w.r.t.' to mean 'with respect to'.

%\begin{itemize}
%	\item $\O^*$, for the adjoint form of $\O$, given by
%	$$
%	\O^*(\xi,\eta)=\ol{\O(\eta,\xi)} \qquad \forall \xi,\eta \in \D;
%	$$
%	\item $\Re \O$ and $\Im \O$, for the real and the imaginary parts, respectively,
%	$$
%	\Re \O=\frac{1}{2}(\O+\O^*) \qquad \Im \O=\frac{1}{2i}(\O-\O^*);
%	$$
%	\item $\rn_\O$, for the numerical range $\{\O(\xi,\xi):\xi\in \D, \n{\xi}=1\}$ of $\O$;
%	\item $\iota$, for the bounded sesquilinear form  $\iota(\xi,\eta)=\pin{\xi}{\eta}$,  $\xi,\eta \in \H$.
%\end{itemize}

\no The following definition of q-closed forms is taken from {\cite[Proposition 3.2]{RC_CT}}.

\begin{defin}
	\label{def_q_chiusa}
	A sesquilinear form $\O$ on $\D$ is called {\it  q-closed with respect to} a norm on $\D$ which is denoted by $\noo$ if
	\begin{enumerate}
		\item $\Eo:=\D[\noo]$ is a reflexive Banach space;
		\item the embedding  $\Eo\hookrightarrow \H$ is continuous;
		\item there exists $\beta\geq0$ such that $|\O(\xi,\eta)|\leq \beta \n{\xi}_\O \n{\eta}_\O$ for all $\xi,\eta\in \D$; i.e., $\O$ is bounded on $\Eo$.
	\end{enumerate}
	If $\Eo$ is a Hilbert space, then $\O$ is called {\it q-closed with respect to the inner product of $\Eo$}.
\end{defin}

\no Let $\O$ be a q-closed sesquilinear form on $\D$ w.r.t. $\noo$. We denote by $\Eo^\times$ the conjugate dual space of $\Eo:=\D[\noo]$ and by $\pin{\L}{\xi}$ the action of the conjugate linear functional $\L\in \Eo^\times$ on an element $\xi\in \D$. The reason why we use also here the symbol $\pint$ is that $\H$ is continuously embedded into $\Eo^\times$ and the action of elements of $\Eo^\times$ is an extension of the inner product of $\H$ (see \cite[Sect. 4]{RC_CT}). \\
Let $\Po(\O)$ be the set of bounded sesquilinear forms $\Up$ on $\H$ such that
\begin{enumerate}
	\item if $(\O+\Up)(\xi,\eta)=0$ for all $\eta \in \D$, then $\xi=0$;
	\item for all $\L\in\Eo^\times$ there exists $\xi\in \D$ such that 
	$\pin{\L}{\eta}=(\O+\Up)(\xi,\eta)$ for all $\eta \in \D$.
\end{enumerate}

\begin{defin}
	\label{def_solv}
	If the set $\Po(\O)$ is not empty, then $\O$ is said to be  {\it solvable w.r.t.}  $\noo$  (if moreover $\noo$ is a Hilbert norm, then $\O$ is also said to be {\it solvable w.r.t. the inner product} induced by $\noo$).
\end{defin}

\no Solvable forms are q-closed forms characterized by the existence of a bounded sesquilinear form $\Up$ on $\H$ such that the operator $X_\Up: \Eo \to \Eo^\times$ is bijective, where $
\pin{X_\Up\xi}{\eta}=\O(\xi,\eta)+\Up(\xi,\eta)$ for all $\eta \in \Eo
$ (see \cite[Lemma 5.6]{Tp_DB}). Therefore, the set $\Po(\O)$ denotes perturbations of $\O$ with bounded forms which induce a bijection of $\Eo$ onto $\Eo^\times$. Equivalent characterizations of solvable forms are provided by \cite[Lemma 5.1]{RC_CT}.

The next theorem generalizes Kato's First representation theorem for solvable forms (for the proof see \cite[Theorem 4.6]{RC_CT} and also \cite[Theorem 2.5]{Second}).

\begin{teor}
	\label{th_rapp_risol}
	Let $\O$ be a solvable sesquilinear form on $\D$ w.r.t. a norm $\noo$. Then there exists a closed operator $T$, with dense domain $D(T)\subseteq \D$ in $\H$, such that the following statements hold.
	\begin{enumerate}
		\item $\O(\xi,\eta)=\pin{T\xi}{\eta},$ for all $\xi\in D(T),\eta \in \D$.
		\item $D(T)$ is dense in $\D[\noo]$.
		\item If $\Up \in \Po(\O)$ and $B\in \B(\H)$ is the bounded operator such that
		$\Up(\xi,\eta)=\pin{B\xi}{\eta}$  for all $\xi, \eta \in \D$,
		then $0\in \rho(T+B)$. In particular, if $\Up=-\lambda \iota$, with $\lambda \in \C$, then $\lambda \in \rho(T)$, the resolvent set of  $T$.
	\end{enumerate}
	The operator $T$ is uniquely determined by the following condition. Let $\xi,\chi\in \H$. Then $\xi\in D(T)$ and $T\xi=\chi$ if and only if $\xi\in \D$ and $\O(\xi,\eta)=\pin{\chi}{\eta}$ for all $\eta$ belonging to a dense subset of $\D[\noo]$.
%	Then there exists a closed operator $T$ with dense domain $	D(T)\subseteq \D$ in $\H$, such that
%	\begin{equation}
%	\label{eq_rapp}
%	\O(\xi,\eta)=\pin{T\xi}{\eta}, \qquad \forall \xi\in D(T),\eta \in \D.
%	\end{equation}
%	Moreover,
%	\begin{enumerate}
%		\item $D(T)$ is dense in $\Eo:=\D[\noo]$;
%		\item if $T'$ is an operator with domain $D(T')\subseteq \D$ and
%		$
%		%\label{th_rapp_T'}
%		\O(\xi,\eta)=\pin{T'\xi}{\eta}
%		$ 
%		for all $\xi\in D(T')$ and $\eta$ belonging to a dense subset of $\Eo$, then $T' \subseteq T$;
%		\item a bounded form $\Up(\cdot,\cdot)=\pin{B\cdot}{\cdot}$ belongs to $\Po(\O)$ if and only if
%		$0\in \rho(T+B)$. In particular, if $\Up=-\lambda \iota$, with $\lambda \in \C$, then $\Up\in \Po(\O)$ if and only if $\lambda \in \rho(T)$;
%		\item $T$ is the unique operator satisfying \emph{(\ref{eq_rapp})} with the property that $T+B$ has range $\H$, where $\Up(\cdot,\cdot)=\pin{B\cdot}{\cdot}$ is a form belonging to $\Po(\O)$.
%	\end{enumerate}
\end{teor}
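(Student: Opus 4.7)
The plan is to construct $T$ as the maximal operator associated to $\O$ and then leverage the bijectivity of $X_\Up$, granted by solvability, to verify every assertion. Fix $\Up \in \Po(\O)$ and write $\Up(\xi,\eta)=\pin{B\xi}{\eta}$ with $B \in \B(\H)$; by the characterisation recalled just before the theorem, $X_\Up : \Eo \to \Eo^\times$ is a continuous bijection. Define
$$
D(T)=\{\xi \in \D : \exists\, \chi \in \H \text{ with } \O(\xi,\eta)=\pin{\chi}{\eta} \text{ for all } \eta \in \D\}, \qquad T\xi=\chi.
$$
This is well-defined because $\D$ is dense in $\H$, so the $\chi$ is unique. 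Part (1) is then built in.

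Next I would prove (3), which is the core step. Given $\zeta \in \H$, view $\zeta$ as an element of $\Eo^\times$ via the functional $\pin{\zeta}{\cdot}$, which is continuous on $\Eo$ because $\nor \leq C\noo$. Surjectivity of $X_\Up$ yields $\xi \in \Eo=\D$ with $\O(\xi,\eta)+\pin{B\xi}{\eta}=\pin{\zeta}{\eta}$ for all $\eta \in \D$; rearranging places $\xi$ in $D(T)$ with $(T+B)\xi=\zeta$. Injectivity of $T+B$ is literally condition (1) in the definition of $\Po(\O)$. Continuity of $(T+B)^{-1}$ on $\H$ follows from the open mapping theorem applied to $X_\Up : \Eo \to \Eo^\times$ and the continuous embedding $\Eo \hookrightarrow \H$. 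Hence $0 \in \rho(T+B)$; taking $\Up=-\lambda\iota$ yields $\lambda \in \rho(T)$. Closedness of $T$ is then immediate, since $T+B$ has a bounded everywhere-defined inverse and $B \in \B(\H)$.

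For part (2), the trick is to note that the restriction of $X_\Up^{-1}$ to $\H \subseteq \Eo^\times$ is precisely $(T+B)^{-1}$ acting into $D(T)\subseteq \Eo$, and that $X_\Up^{-1}$ is a topological isomorphism $\Eo^\times \to \Eo$. Reflexivity of $\Eo$ together with the dense continuous embedding $\Eo \hookrightarrow \H$ ensures that $\H$ is in turn continuously and densely embedded in $\Eo^\times$. Since $X_\Up^{-1}$ carries dense subspaces to dense subspaces and its image of $\H$ is $D(T)$, this gives density of $D(T)$ in $\Eo$; composing with $\Eo \hookrightarrow \H$ (dense) then gives density of $D(T)$ in $\H$ as well.

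Uniqueness follows easily from what we have: if $\xi\in\D$, $\chi\in\H$, and $\O(\xi,\eta)=\pin{\chi}{\eta}$ holds for $\eta$ in a dense subset of $\Eo$, both sides are $\noo$-continuous in $\eta$ (by q-closedness of $\O$ and the embedding $\Eo\hookrightarrow\H$), so the equality extends to all $\eta\in\D$, placing $\xi$ in $D(T)$ with $T\xi=\chi$; the converse is trivial given (2). The main obstacle I expect is the careful bookkeeping around the Gelfand-type triple $\Eo \hookrightarrow \H \hookrightarrow \Eo^\times$, in particular identifying the inverse $(T+B)^{-1}$ on $\H$ with the appropriate restriction of $X_\Up^{-1}$ and exploiting reflexivity to put $\H$ densely inside $\Eo^\times$; once that identification is made, every remaining assertion reduces to surjectivity, injectivity, or continuity of $X_\Up$.
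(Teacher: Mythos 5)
Your proposal is correct and follows essentially the same route as the proof the paper cites (\cite[Theorem 4.6]{RC_CT}): define $T$ as the maximal associated operator, exploit the bijectivity and boundedness of $X_\Up:\Eo\to\Eo^\times$ to identify $(T+B)^{-1}$ with the restriction of $X_\Up^{-1}$ to $\H$, and use reflexivity of $\Eo$ to embed $\H$ densely in $\Eo^\times$ for the density of $D(T)$ in $\D[\noo]$. All the key identifications (in particular $D(T)=X_\Up^{-1}(\H)$ and the continuity of the composed inverse) are present and correctly justified.
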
	

%\no Let $\O$ be a q-closed sesquilinear form on $\D$ w.r.t. $\noo$. Let $\Up$ be a bounded sesquilinear form on $\H$. If $\xi\in \D$, we can define the conjugate linear functional $\O_\Up^\xi$ on $\Eo$ by
%$$
%\pin{\O_\Up^\xi}{\eta}=\O(\xi,\eta)+\Up(\xi,\eta) \qquad \forall \eta \in \Eo,
%$$
%which is bounded in $\Eo$, and also the operator
%\begin{align*}
%X_\Up: \Eo &\to \Eo^\times \\
%\xi &\mapsto \O_\Up^\xi,
%\end{align*}
%is bounded; i.e., $X_\Up \in \B(\Eo,\Eo^\times)$.\\
%
%\no $\Up \in \Po(\O)$ if, and only if, $X_\Up$ is a bijection of $\Eo$ onto $\Eo^\times$ if, and only if, $X_\Up$ is invertible with bounded inverse ().

%\begin{osr}
%	The operator $T$ in  Theorem \ref{th_rapp_risol} is called the {\it operator associated} to $\O$. By the second statement, 
%	\begin{align}
%	\label{eq_dom_intro}
%	D(T)=\{\xi \in \D:\exists \chi \in \H, \O(\xi,\eta)=\pin{\chi}{\eta}, \forall \eta \in \D\}
%	\end{align}
%	and $T\xi=\chi$, for all $\xi\in D(T)$ and $\chi$ as in (\ref{eq_dom_intro}).
%\end{osr}

\no Kato's second theorem is generalized in Theorem \ref{2_repr_th_2} below for the special class of hyper-solvable forms defined in the following way (see \cite[Lemma 4.14, Theorem 4.17]{Second}).

\begin{defin}
	A solvable sesquilinear form on $\D$ with associated operator $T$ is said {\it hyper-solvable} if $\D=D(|T|^\mez)$.
\end{defin}

% \no Kato's second representation theorem is generalized as follow.

\begin{teor}%[{\cite[Theorem 4.16]{Second}}]
	\label{2_repr_th_2}
	Let $\O$ be a hyper-solvable sesquilinear form on $\D$ w.r.t. a norm $\noo$ and with associated operator $T$. Then $\D=D(|T^*|^\mez)$ and 
	$$
	\O(\xi,\eta)=\pin{U|T|^\mez \xi}{|T^*|^\mez \eta}, \qquad \forall \xi,\eta \in \D,
	$$
	$$
	\O(\xi,\eta)=\pin{|T^*|^\mez U\xi}{|T^*|^\mez \eta}, \qquad \forall \xi,\eta \in \D,
	$$
	where $T=U|T|=|T^*|U$ is the polar decomposition of $T$, and $\noo$ is equivalent to the graph norms of $|T|^\mez$ and of $|T^*|^\mez$. 
\end{teor}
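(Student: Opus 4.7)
The plan is to establish the theorem in four stages: first a norm equivalence on $\D$, then the crucial equality $\D = D(|T^*|^\mez)$, then the representation on $D(T)\times \D$, and finally an extension by density.

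\textbf{Step 1 (norm equivalence with the $|T|^\mez$-graph norm).} By hypothesis $\D = D(|T|^\mez)$ as sets. The graph norm $\n{\xi}^2+\n{|T|^\mez\xi}^2$ makes $\D$ a Hilbert space (in particular a reflexive Banach space), continuously embedded into $\H$; the norm $\noo$ does the same by q-closedness. I would apply the closed graph theorem to the identity map between these two Banach structures on $\D$: the map is closed because each norm dominates $\nor$, so sequences that converge in both senses must have the same $\H$-limit. Open mapping then yields equivalence of $\noo$ with the $|T|^\mez$-graph norm.

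\textbf{Step 2 (the equality $\D = D(|T^*|^\mez)$).} This is the heart of the theorem and, for non-normal $T$, the main obstacle: in general $D(|T|^\mez)\ne D(|T^*|^\mez)$, so hyper-solvability is precisely what must do the work. The route I would follow is to observe that the adjoint form $\O^*$ is solvable with associated operator $T^*$ (straightforward from the definition of the associated operator and of $\Po(\O)$, since $\Po(\O^*)=\{\Up^*:\Up\in\Po(\O)\}$) and to invoke the characterizations of hyper-solvability in \cite[Lemma 4.14]{Second} to transfer the property $\D=D(|T|^\mez)$ to $\D=D(|(T^*)|^\mez)=D(|T^*|^\mez)$. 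Repeating Step~1 for $\O^*$ with operator $T^*$ then gives equivalence of $\noo$ with the $|T^*|^\mez$-graph norm.

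\textbf{Step 3 (representation on $D(T)\times \D$).} Using the polar decomposition $T=U|T|$ and the standard intertwining $U|T|^\mez = |T^*|^\mez U$ on $D(|T|^\mez)$ (which in turn gives $T=|T^*|^\mez U|T|^\mez$ on $D(T)$), for $\xi\in D(T)$ and $\eta\in \D = D(|T^*|^\mez)$ I would compute
\[
\O(\xi,\eta)=\pin{T\xi}{\eta}=\pin{|T^*|^\mez U|T|^\mez \xi}{\eta}=\pin{U|T|^\mez\xi}{|T^*|^\mez\eta},
\]
where the last step uses self-adjointness of $|T^*|^\mez$ together with $\eta\in D(|T^*|^\mez)$. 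Substituting $U|T|^\mez\xi = |T^*|^\mez U\xi$ gives the second representation on $D(T)\times \D$.

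\textbf{Step 4 (extension to $\D\times\D$).} Theorem \ref{th_rapp_risol}(2) gives density of $D(T)$ in $\D[\noo]$. By Step~1, convergence in $\noo$ is equivalent to convergence in the $|T|^\mez$-graph norm, so $|T|^\mez\xi_n\to |T|^\mez\xi$ in $\H$ whenever $\xi_n\to\xi$ in $\D[\noo]$ with $\xi_n\in D(T)$; by Step~2 the analogous statement holds for $|T^*|^\mez$. Both sides of each representation are therefore continuous in $\xi\in\D$, and together with continuity in $\eta$ this extends the formulas from $D(T)\times\D$ to $\D\times\D$. The main obstacle remains Step~2: without the symmetric characterization of hyper-solvability, the passage from $|T|^\mez$ to $|T^*|^\mez$ cannot be carried out since this square-root domain identity fails for general closed operators.
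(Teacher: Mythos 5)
Your proposal is correct and follows essentially the same route as the paper, which prints no proof of this theorem but defers precisely to \cite[Lemma 4.14, Theorem 4.17]{Second}: the crux you isolate in Step 2 --- that $\D=D(|T|^\mez)$ forces $\D=D(|T^*|^\mez)$ for a solvable form --- is exactly the content of that Lemma 4.14, while your Steps 1, 3 and 4 correctly supply the remaining routine arguments (closed-graph/open-mapping equivalence of $\noo$ with the graph norm of $|T|^\mez$, the intertwining $U|T|^\mez=|T^*|^\mez U$ giving $T=|T^*|^\mez U|T|^\mez$ on $D(T)$, and extension by density of $D(T)$ in $\D[\noo]$). The only caveat is that Step 2 is cited rather than proved, so your argument is complete exactly to the extent that the quoted lemma is.
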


\begin{osr}
	\label{rem_Po_open}
	According to Theorem \ref{th_rapp_risol} and since the resolvent set of a closed operator is open we obtain the following property: if $\O$ is a solvable sesquilinear form and $\Up \in \Po(\O)$ %, then $\lambda\in \rho(T)$, where $T$ is the operator associated to $\O$, 
	then there exists $\delta >0$ such that $(\Up + \mu\iota)\in \Po(\O)$, for all $|\mu|<\delta$.
\end{osr}

\no We mention some other features of a q-closed/solvable form $\O$ w.r.t. $\noo$:
\begin{itemize}
	\item the same property of being q-closed/solvable holds for the adjoint $\O^*$ (\cite[Theorem 4.11]{RC_CT}) (this implies also that $\Re \O$ and $\Im \O$ are q-closed forms);
	\item the operators associated to $\O$ and to $\O^*$ are the adjoint of each other (\cite[Theorem 4.11]{RC_CT});
	\item the peculiarity of symmetric solvable forms to have self-adjoint associated operators (\cite[Corollary 4.14]{RC_CT});
	\item all and only norms w.r.t. which $\O$ is q-closed/solvable are equivalent to $\noo$ (\cite[Theorems 3.8, 4.4]{RC_CT});
	\item different hyper-solvable forms have different associated operators ({\cite[Theorem 5.3]{Second}}).
\end{itemize}

\begin{osr}
	Let $\O_1,\O_2$ be two q-closed sesquilinear forms on $\D$ w.r.t. $\nor_1$ and $\nor_2$, respectively, and let $c\in \C$. Then, the two norms are equivalent by \cite[Theorem 2.5]{RC_CT} and the sesquilinear forms $c\O_1,\O_1^*,\Re\O_1, \Im \O_1, \O_1+\O_2$ are q-closed w.r.t. both $\nor_1$ and $\nor_2$.
\end{osr}

\no  We conclude this section presenting some examples of solvable forms (cf. \cite[Example 4.16]{RC_CT} and \cite[Example 4.5]{Second}).

% Multiplication on l_2
\begin{esm}
	\label{esm_1}
	Let $\ull{\alpha}:=\{\alpha_n\}$ be a sequence of complex numbers and 
	$$
	\Oa(\{\xi_n\},\{\eta_n\})=\sum_{n=1}^\infty \alpha_n \xi_n \ol{\eta_n}
	$$
	with domain $\D=\{\{\xi_n\}\in \ll_2:\sum_{n=1}^\infty |\alpha_n||\xi_n|^2< \infty\}$.
	The  form $\Oa$ is hyper-solvable w.r.t. the norm given by
	$$
	\n{\{\xi_n\}}_{\Oa}= \left (\sum_{n=1}^\infty |\xi_n|^2+\sum_{n=1}^\infty |\alpha_n| |\xi_n|^2\right )^\mez.
	$$
	Moreover,
	\begin{enumerate}
		\item if  $\ol{\{\alpha_n:n\in \N\}}\neq \C$, then $-\lambda \iota\in \Po(\Oa)$, for all $\lambda\notin \ol{\{\alpha_n:n\in \N\}}$;
		\item in general\footnote{The case $\ol{\{\alpha_n:n\in \N\}}= \C$ is not considered in \cite{Tp_DB,RC_CT}.}, we set 
		$\ull{\beta}=\{\beta_n\}$ the sequence such that $\beta_n=-\alpha_n+1$ if $|\alpha_n|\leq 1$, and $\beta_n=0$ if $|\alpha_n|> 1$.	Therefore, 
		the form $\O_{\ull{\beta}}$ is bounded and $0\notin \ol{\{\alpha_n+\beta_n:n\in \N\}}$. From the previous case, $\O_{\ull{\alpha}}+\O_{\ull{\beta}}$ is solvable and $\O_{\ull{\beta}}\in \Po(\Oa)$.		
		%%$\ol{\{\alpha_n:n\in \N\}}= \C$
	\end{enumerate}
	The operator associated to $\Oa$ is the multiplication operator $\Ma$ by $\ull{\alpha}$, with domain
	$$
	D(\Ma)=\left \{\{\xi_n\}\in \mathit{l}_2 : \sum_{n=1}^\infty |\alpha_n|^2|\xi_n|^2 <\infty \right \}
	$$
	and given by $\Ma \{\xi_n\}=\{\alpha_n\xi_n\},$ for every $\{\xi_n\} \in D(\Ma)$.
\end{esm}

% Multiplication on L_2

%\begin{esm}
%	\label{esm_2}
%Let $r:\C\to \C$ be a measurable function and $\O$ the sesquilinear form with domain
%$$
%\D:=\left \{f\in L^2(\C): \int_{\C}|r(z)||f(z)|^2dz< \infty \right\}
%$$
%and given by
%$	\displaystyle
%\O(f,g)=\int_\C r(z)f(z)\ol{g(z)}dz,$ for all $f,g\in \D$.\\
%$\O$ is q-closed w.r.t. the norm
%$$ \n{f}_\O=\left (\int_{\C}(1+|r(z)|)|f(z)|^2dz\right )^{\frac{1}{2}}, \qquad  f\in \D.$$
%\end{esm}

\no The next one is a new example of solvable sesquilinear form.

% Banach space
\begin{esm}
	\label{esm_3}

Let $1<p<2$ and $q$ be such that $\frac{1}{p}+\frac{1}{q}=1$. For convenience we denote by $\underline{\xi}=\{\xi_n\}$ the generic element of the space $l_r$ with $r>1$. Let moreover $\D=l_p\oplus l_q$, which is a reflexive Banach space if it is endowed with the norm $\n{(\underline{\xi},\underline{\eta})}_\D=\n{\underline{\xi}}_p+\n{\underline{\eta}}_q$ (as usual, $\nor_p$ and $\nor_q$ are the classical norms on $l_p$ and $l_q$, respectively). The Banach space $\D[\nor_\D]$ will be denoted by $\Ee$. Observe that $\Ee$ is  isomorphic to its conjugate dual space $\Ee^\times$. Indeed, we have the isomorphism (we identify $\Ee^\times$ with $l_q\oplus l_p$)
\begin{align*}
X:\;\;\; \Ee \;\;\; &\to \;\;\;\Ee^\times \\
(\underline{\xi},\underline{\eta}) &\mapsto (\underline{\eta},\underline{\xi}).
\end{align*}
%where $g\in L^q(0,1)$ is identified with a functional on $L^p(0,1)$ in the classic way, and the same for $f\in L^p(0,1)$.
The action of $X$ is given by 
\begin{equation}
\pin{X(\underline{\xi},\underline{\eta})}{(\underline{\xi'},\underline{\eta'})}=\sum_{n=1}^\infty (\eta_n\ol{\xi_n'}+\xi_n\ol{\eta_n '})
\label{actionX} 
\end{equation}
for all $(\underline{\xi},\underline{\eta}),(\underline{\xi'},\underline{\eta'})\in \Ee$. Now we define a sesquilinear form $\O$ on $\D$ exactly by (\ref{actionX}); i.e., for $(\underline{\xi},\underline{\eta}),(\underline{\xi'},\underline{\eta'})\in \Ee$,
$$
\O ((\underline{\xi},\underline{\eta}),(\underline{\xi'},\underline{\eta'}) ):=\sum_{n=1}^\infty (\eta_n\ol{\xi_n'}+\xi_n\ol{\eta_n '}).
$$
%$$
%\O \Bigl ((\{\xi_n\},\{\eta_n\}),(\{\xi_n'\},\{\eta_n'\})\Bigl ):=\sum_{n=1}^\infty (\eta_n\ol{\xi_n'}+\xi_n\ol{\eta_n '}).
%$$
$\O$ is bounded on $\Ee$. Indeed, an easy computation shows that
\begin{align*}
\Bigl |\O ((\underline{\xi},\underline{\eta}),(\underline{\xi'},\underline{\eta'}) )\Bigl | &\leq \left| \sum_{n=1}^\infty \eta_n\ol{\xi_n'} \right| + \left| \sum_{n=1}^\infty \xi_n\ol{\eta_n '} \right| \\
& \leq \n{\underline{\xi'}}_p\n{\underline{\eta}}_q+\n{\underline{\xi}}_p\n{\underline{\eta'}}_q \\
& \leq \n{(\underline{\xi},\underline{\eta})}_\D\n{(\underline{\xi'},\underline{\eta'})}_\D.
\end{align*}
Our goal is to show that $\O$ is solvable w.r.t. $\nor_\Ee$. The first thing we need is a Hilbert space in which $\Ee$ can be continuously embedded with dense range. We can make the following considerations:
\begin{itemize}
	\item $l_p$ is continuously embedded in the Hilbert space $\H_1:=l_2$ with dense range.
	\item An inner product on $l_q$ can be given by
	$$
	[\underline{\eta},\underline{\eta'}]=\sum_{n=1}^\infty 2^{-n}\eta_n\ol{\eta_n'}, \qquad  \underline{\eta},\underline{\eta'}\in l_q.
	$$ 
	In particular, it is well-defined since
	\begin{align}
	\nonumber |[\underline{\eta},\underline{\eta'}]| &\leq \sum_{n=1}^\infty 2^{-n}|\eta_n||\eta_n'| \\
	\nonumber & \leq \sum_{m=1}^\infty 2^{-m}\n{\underline{\eta}}_q\n{\underline{\eta'}}_q \\
	\label{ineq_exm} & = \n{\underline{\eta}}_q\n{\underline{\eta'}}_q.
	\end{align}
	Let $(\H_2,[\cdot,\cdot])$ be the completion of the pre-Hilbert space $(l_q,[\cdot,\cdot])$. Moreover, by (\ref{ineq_exm}) we obtain $[\underline{\eta},\underline{\eta}]^\mez \leq \n{\underline{\eta}}_q$ for every $\underline{\eta}\in l_q$. Hence, $l_q$ is continuously embedded into $\H_2$ and of course the range is dense.
	\item $\Ee$ is continuously embedded into $\H:=\H_1\oplus\H_2$ and the range is dense.
\end{itemize}

All the arguments above prove that $\O$ is q-closed w.r.t. $\nor_\D$. Moreover, by \cite[Lemma 5.6]{Tp_DB} $\O$ is solvable w.r.t. $\nor_\D$ (indeed the operator $X_\vartheta$ coincides with $X$ which is bijective).

However, $\O$ is not solvable w.r.t. any inner product. Indeed, were it so, then $\D$ would be a Hilbert space with the same topology of $\Ee$ by \cite[Theorem 3.8]{RC_CT}. The subspace $l_p\oplus \{0\}$ is closed in $\Ee$, therefore $l_p$ would be a Hilbert space with the same topology induced by $\n{\cdot}_p$. But we know that $l_p$ is not isomorphic to a Hilbert space (for example, it is a consequence of \cite{Murray}).

Moreover, this form is not hyper-solvable by \cite[Corollary 4.4]{Second}.
\end{esm}

%\begin{teor}[{\cite[Theorems 3.8, 4.4]{RC_CT}}]
%	\label{th_q_cl_sol_norm_eq}	
%	Let $\O$ be a q-closed (respectively solvable) sesquilinear form on $\D$ w.r.t. a norm $\nor_1$ and let $\nor_2$ be a norm on $\D$. Then, $\O$ is q-closed (respectively solvable) w.r.t. $\nor_2$ if, and only if, $\nor_1$ and $\nor_2$ are equivalent.
%\end{teor}
%
%\begin{teor}[{\cite[Theorem 4.11]{RC_CT}}]
%	\label{th_ris_agg}
%	If $\O$ is a q-closed sesquilinear form on $\D$ w.r.t. a norm $\noo$, then also the adjoint  $\O^*$ is q-closed w.r.t. $\noo$. Moreover $\Up\in \Po(\O)$ if, and only if, $\Up^*\in \Po(\O^*)$, and if $\O$ is solvable w.r.t.  $\noo$, and with the associated operator $T$, then $\O^*$ is solvable w.r.t. $\noo$, with associated operator $T^*$.
%\end{teor}
%
%\begin{cor}[{\cite[Corollary 4.14]{RC_CT}}]
%	\label{cor_auto<->simm}
%	The operator associated to a solvable sesquilinear form is self-adjoint if, and only if, the form is symmetric.
%\end{cor}

\section{Numerical range}
\label{sec:num}

As it is shown in this section, the numerical range of a q-closed form plays a special role on the property of being solvable.

%\begin{teor}[{\cite[Theorem 5.2]{RC_CT}}]
%	\label{crit_gener_rn}
%	Let $\O$ be a q-closed sesquilinear form on $\D$ w.r.t. a norm $\noo$ with numerical range $\rn_\O$ and let $\Up$ be a  bounded form in $\H$. Assume that $\rn_\O\cap \rn_{-\Up}=\varnothing$, where $\rn_{-\Up}$ is the numerical range of $-\Up$. Then, $\Upsilon \in \Po(\O)$ if, and only if, either the statement 1. or 2. below holds
%	\begin{enumerate}
%		\item if $\{\xi_n\}$ is a sequence in $\D$ such that $\displaystyle \sup_{\n{\eta}_\O=1} |(\O+\Up)(\xi_n,\eta)|\to 0$, then $ \n{\xi_n}_\O\to 0$.
%		\item there exists a constant $c>0$ such that
%		$$ c\n{\xi}_\O\leq \sup_{\n{\eta}_\O=1} |(\O+\Up)(\xi,\eta)| \qquad \forall \xi \in \D.$$
%	\end{enumerate}
%\end{teor}
%
%Another way to formulate statement 2 above is that the norm $$ \sup_{\n{\eta}_\O=1} |(\O+\Up)(\cdot,\eta)|$$
%on $\D$ is equivalent to $\noo$.

\begin{lem}
	\label{conn_comp}
	Let $\O$ be a solvable sesquilinear form and let $\rn_\O\neq \C$ be its numerical range. Assume that $\mathfrak{m}$ is a connected component of  $\ol{\rn_\O}^c$, the complementary set of the closure of $\rn_\O$. Then the following statements are equivalent:
	\begin{enumerate}
		\item $-\lambda \iota \in \Po(\O)$ for some $\lambda \in \mathfrak{m}$;
		\item $-\mu \iota \in \Po(\O)$ for all $\mu \in \mathfrak{m}$.
	\end{enumerate}
\end{lem}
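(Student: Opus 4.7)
The implication $(2)\Rightarrow(1)$ is immediate, so my plan is to prove $(1)\Rightarrow(2)$. Let $T$ be the operator associated to $\O$ and fix $\lambda_0\in\mathfrak{m}$ with $-\lambda_0\iota\in\Po(\O)$; then $\lambda_0\in\rho(T)$ by Theorem~\ref{th_rapp_risol}(3). The overall strategy is to reduce the assertion to the classical fact that, for any closed operator $T$, each connected component of $\C\setminus\overline{\rn_T}$ is contained either entirely in $\rho(T)$ or entirely in the spectrum of $T$ (a standard consequence of $\n{(T-\lambda)\xi}\geq d(\lambda,\rn_T)\n{\xi}$ together with a Neumann series argument).

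To invoke this fact I first need $\overline{\rn_\O}=\overline{\rn_T}$. The inclusion $\rn_T\subseteq\rn_\O$ is immediate from $\O(\xi,\xi)=\pin{T\xi}{\xi}$ on $D(T)$. For the reverse inclusion of closures, I take $\xi\in\D$ of unit $\H$-norm, approximate $\xi$ in $\Eo$ by a sequence in $D(T)$ (possible by Theorem~\ref{th_rapp_risol}(2)), renormalize the approximants to unit $\H$-norm (legitimate since the embedding $\Eo\hookrightarrow\H$ is continuous), and then use continuity of $\O$ on $\Eo$ to deduce $\O(\xi,\xi)\in\overline{\rn_T}$. Combined with the classical dichotomy above and the fact that $\lambda_0\in\rho(T)\cap\mathfrak{m}$, this yields $\mathfrak{m}\subseteq\rho(T)$.

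The next step upgrades $\mu\in\rho(T)$ to $-\mu\iota\in\Po(\O)$. Let $\widetilde I:\Eo\to\Eo^\times$ denote the canonical embedding (factoring through $\H$) and set $K_0:=X_{-\lambda_0\iota}^{-1}\widetilde I\in\B(\Eo)$. Since $X_{-\lambda_0\iota}^{-1}$ restricted to $\H\subseteq\Eo^\times$ coincides with $(T-\lambda_0)^{-1}$, one verifies $K_0\xi=(T-\lambda_0)^{-1}\xi$ for every $\xi\in\Eo$. The identity
$$
X_{-\mu\iota}=X_{-\lambda_0\iota}\bigl(I-(\mu-\lambda_0)K_0\bigr)
$$
then reduces, for $\mu\in\mathfrak{m}\setminus\{\lambda_0\}$, the required bijectivity of $X_{-\mu\iota}:\Eo\to\Eo^\times$ to the bijectivity of $\nu I-K_0$ on $\Eo$, where $\nu:=(\mu-\lambda_0)^{-1}$.

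The remaining and most delicate step is a spectral transfer from $\H$ to $\Eo$. Since $\mu\in\rho(T)$, the scalar $\nu$ lies in the resolvent set of $(T-\lambda_0)^{-1}\in\B(\H)$, so $\nu I-(T-\lambda_0)^{-1}$ is a bijection of $\H$. Injectivity of $\nu I-K_0$ on $\Eo$ is inherited directly; for surjectivity, given $\eta\in\Eo$ the unique $\xi\in\H$ solving $\nu\xi-(T-\lambda_0)^{-1}\xi=\eta$ must already lie in $\Eo$, because $\nu\xi=\eta+(T-\lambda_0)^{-1}\xi$ and $(T-\lambda_0)^{-1}\xi\in D(T)\subseteq\Eo$. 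The open mapping theorem applied in the Banach space $\Eo$ then furnishes boundedness of the inverse, whence $-\mu\iota\in\Po(\O)$. The hardest point I foresee is precisely this spectral transfer between the two ambient Banach spaces, which rests on the inclusion $(T-\lambda_0)^{-1}(\H)=D(T)\subseteq\Eo$; everything else is bookkeeping around the factorization of $X_{-\mu\iota}$.
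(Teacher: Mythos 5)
Your argument is correct, and its first half is exactly the paper's proof: the paper disposes of the lemma by citing point 3 of Theorem \ref{th_rapp_risol} together with \cite[Theorem V.3.2]{Kato}, i.e.\ precisely the constancy of the defect numbers of $T-\lambda$ (equivalently, your open--closed argument from $\n{(T-\lambda)\xi}\geq d(\lambda,\ol{\rn_\O})\n{\xi}$ plus a Neumann series) on connected components of $\ol{\rn_\O}^c$, which upgrades $\lambda_0\in\mathfrak{m}\cap\rho(T)$ to $\mathfrak{m}\subseteq\rho(T)$. Where you genuinely diverge is the return trip from $\mu\in\rho(T)$ to $-\mu\iota\in\Po(\O)$: the paper treats this as immediate, implicitly delegating it to the characterizations of $\Po(\O)$ in \cite{RC_CT} (Theorem 5.2 and Corollary 5.3 there, which for $\lambda\notin\ol{\rn_\O}$ tie membership of $-\lambda\iota$ in $\Po(\O)$ to properties of $T-\lambda$), whereas you prove it from scratch via the factorization $X_{-\mu\iota}=X_{-\lambda_0\iota}\bigl(I-(\mu-\lambda_0)K_0\bigr)$ with $K_0=X_{-\lambda_0\iota}^{-1}\widetilde{I}$ acting as $(T-\lambda_0)^{-1}$ on $\Eo$, and then transfer the invertibility of $\nu I-(T-\lambda_0)^{-1}$ from $\H$ to $\Eo$ using $R\bigl((T-\lambda_0)^{-1}\bigr)=D(T)\subseteq\D$. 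This buys a self-contained proof relying only on the bijectivity characterization of $\Po(\O)$ (\cite[Lemma 5.6]{Tp_DB}) rather than on the numerical-range criteria of \cite{RC_CT}. Two minor economies you could take: the equality $\ol{\rn_\O}=\ol{\rn_T}$ (which is \cite[Proposition 4.13]{RC_CT}) is more than you need, since the trivial inclusion $\rn_T\subseteq\rn_\O$ already places $\mathfrak{m}$ inside a single component of the complement of $\ol{\rn_T}$; and the final appeal to the open mapping theorem is superfluous, as membership in $\Po(\O)$ only requires bijectivity of $X_{-\mu\iota}$, not boundedness of its inverse.
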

\begin{proof}
	It is an immediate consequence of point 3 of Theorem \ref{th_rapp_risol} and of the fact that the defect numbers of the associated operator are constant on $\mathfrak{m}$  (see \cite[Theorem V.3.2]{Kato}). 
\end{proof}

\no A particular case of this result for symmetric forms is \cite[Corollary 2.8]{Second}.

Let $\O$ be a q-closed sesquilinear form on $\D$ w.r.t. a norm $\noo$ with numerical range $\rn_\Omega\neq \C$. Let $\Up$ be a bounded form such that $\rn_\O\cap\rn_{-\Up}\neq \varnothing$, where $\rn_{-\Up}$ is the numerical range of $-\Up$ (in particular, $\Up=-\lambda \iota$ with $\lambda\notin \rn_\Omega$). Theorem 5.2 of \cite{RC_CT} gives an equivalent condition for $\Up$ to be in $\Po(\O)$. Instead in Theorem 5.4 of \cite{RC_CT} only a sufficient condition is given. \\  By point 2 of Corollary 5.3 of \cite{RC_CT}, $\Up\in \Po(\O)$ if and only if the map 
$$ \xi\mapsto \sup_{\n{\eta}_\O=1} |(\O+\Up)(\xi,\eta)|$$ defines a norm on $\D$ that
is equivalent to $\noo$.
%
%\begin{teor}
%	\label{cor_crit_qc'}
%	Let $\O$ be a q-closed sesquilinear form on $\D$ w.r.t. a norm $\noo$ with numerical range $\rn_\O$ and let $\lambda\notin \rn_\O$. Then $-\lambda \iota \in \Po(\O)$ if and only if either the statement 1 or 2 below holds
%		\begin{enumerate}
%			\item if $\{\xi_n\}$ is a sequence in $\D$ such that $\displaystyle \sup_{\n{\eta}_\O=1} |(\O-\lambda \iota)(\xi_n,\eta)|\to 0$, then $ \n{\xi_n}_\O\to 0$;
%			\item there exists a constant $c>0$ such that
%			$$ c\n{\xi}_\O\leq \sup_{\n{\eta}_\O=1} |(\O-\lambda \iota)(\xi,\eta)| \qquad \forall \xi \in \D.$$
%		\end{enumerate}
%	Assume that $\O$ is symmetric. Then it is solvable if and only if either the statement 1 or 2 above holds with any $\lambda \in\C\backslash \R$.
%\end{teor}
%\begin{proof}
%	The first part is \cite[Corollary 5.3]{RC_CT}. The second one is a direct consequence of \cite[Corollary 2.8]{Second}.
%\end{proof}

%\no We can reformulate statement 2 of Theorem \ref{cor_crit_qc'} as follows: the map 
%$$ \xi\mapsto \sup_{\n{\eta}_\O=1} |(\O-\lambda \iota)(\cdot,\eta)|$$ defines a norm on $\D$ that
%is equivalent to $\noo$. A result more general than the previous one is Theorem 5.2 of \cite{RC_CT}. 

We also mention that the numerical range of the operator associated to a solvable sesquilinear form is a dense subset of the numerical range of the form (\cite[Proposition 4.13]{RC_CT}).
%\begin{cor}
%	Let $\O$ be a q-closed symmetric sesquilinear form on $\D$ w.r.t. a norm $\noo$. Then, $\O$ is solvable if, and only if, either the statement 1. or 2. below holds
%	\begin{enumerate}
%		\item there exists $\lambda \in\C\backslash \R$ such that if $\{\xi_n\}$ is a sequence in $\D$ and $$\displaystyle \sup_{\n{\eta}_\O=1} |(\O-\lambda)(\xi_n,\eta)|\to 0,$$ then $ \n{\xi_n}_\O\to 0$;
%		\item  there exists $\lambda \in\C\backslash \R$ and a constant $c>0$ such that
%		$$ c\n{\xi}_\O\leq \sup_{\n{\eta}_\O=1} |(\O-\lambda)(\xi,\eta)| \qquad \forall \xi \in \D.$$
%	\end{enumerate}
%\end{cor}
%\begin{proof}
%	It is a direct consequence of Corollaries \ref{Po_symm} and \ref{cor_crit_qc'}.
%\end{proof}

\section{Special cases}
\label{sec:spec}

Many representation theorems for sesquilinear forms in the literature are particular cases of Theorem \ref{th_rapp_risol}. The next list explains well this assertion.

\begin{lem}
\label{th_spec}
\no Let $\O$ be a sesquilinear form on $\D$.
\begin{enumerate}
	\item $\O$ satisfies \cite[Theorem 3.3]{FHdeS} if and only if $\O$ is solvable w.r.t. an inner product, $\rn_\O\subseteq \R$ and $-\lambda \iota \in \Po(\O)$ for some $\lambda \in \R$;
	\item $\O$ satisfies \cite[Theorem 2.3]{GKMV} if and only if $\O$ is solvable w.r.t. an inner product, $\rn_\O\subseteq \R$ and $\vartheta\in \Po(\O)$;
	\item $\O$ is symmetric and satisfies Kato's First theorem %\cite[Theorem VI.2.1]{Kato}
	if and only if $\O$ is solvable and $\rn_\O$  is contained in the half-line $[\omega , +\infty)$ for some $\omega\in \R$;
	\item $\O$ satisfies Kato's First theorem %\cite[Theorem VI.2.1]{Kato} 
	if and only if $\O$ is solvable w.r.t. an inner product, $\rn_\O$ is contained in a sector $\Set:=\{\lambda\in \C:\arg(\lambda-\gamma)\leq \theta\}$, where $\gamma\in \R$ and $0\leq \theta < \frac{\pi}{2}$, and $-\lambda \iota \in \Po(\O)$ for some $\lambda \notin S^c$;
	\item $\O$ satisfies \cite[Theorem 3.1]{McIntosh68} if and only if  $\O$ is solvable w.r.t. an inner product, $\rn_\O$ is contained in the half-plane $\{\lambda\in \C:\Re \lambda \geq 0\}$ and $-i \iota \in \Po(\O)$;
	\item $\O$ satisfies \cite[Proposition 2.1]{McIntosh70} if and only if $\O$ is solvable w.r.t. an inner product and $-\lambda \iota \in \Po(\O)$ for some $\lambda \in \C$;
	\item if $\O$ satisfies \cite[Theorem 2.3]{Schmitz}, then $\O$ is solvable w.r.t. an inner product, $\rn_\O\subseteq \R$ and  $\Po(\O)$ contains a bounded form which is not in general a multiply of the inner product $\pint$;
	\item if $\O$ satisfies \cite[Theorem 11.3]{Schm}, then $\O$ is solvable w.r.t. an inner product, $\rn_\O$ is contained in a half-plane which excludes $0$ and $\vartheta\in \Po(\O)$.
\end{enumerate}
\end{lem}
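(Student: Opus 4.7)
The plan is to handle the eight items separately but in a uniform way, by unpacking each cited theorem's hypotheses and translating them term-by-term into the vocabulary of Definitions \ref{def_q_chiusa} and \ref{def_solv}. Items 1--6 are stated as equivalences, so each one splits into two directions. For the forward direction I would start from the cited theorem's structural assumption---typically that $\D$ carries a Hilbert (or in item 4 at least an inner-product) norm built from $\O$ plus a shift by $\iota$, and that $\O$ is sectorial or semibounded---and verify the three defining conditions of q-closedness (reflexivity of $\Eo$, continuous embedding $\Eo\hookrightarrow\H$, boundedness of $\O$ on $\Eo$), then exhibit an explicit perturbation $\Up$ for which the canonical map $X_\Up:\Eo\to\Eo^\times$ is bijective, so that $\Up\in\Po(\O)$. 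For the converse direction I would read off from Theorem \ref{th_rapp_risol} the precise spectral/sector structure forced on the associated operator $T$ by the shape of $\rn_\O$ and the specified element of $\Po(\O)$.

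First I would dispose of items 1 and 2 together, since they concern symmetric semibounded q-closed forms in the Hilbert setting: the conditions $\rn_\O\subseteq\R$ and $-\lambda\iota\in\Po(\O)$ (resp.\ $\vartheta\in\Po(\O)$) correspond exactly to the closedness plus lower-semiboundedness (resp.\ plus $0\in\rho(T)$) hypothesis of the Fischbacher--Hassi--de Snoo and the Grubišić--Kostrykin--Makarov--Veselić theorems; self-adjointness of $T$ follows from the fourth bullet after Theorem \ref{2_repr_th_2}. Items 3 and 4 are the symmetric and general Kato First Theorems: sectoriality corresponds to $\rn_\O$ being contained in a sector $\Set$, semiboundedness to $\rn_\O$ lying in a half-line, and in either case a single shift $-\lambda\iota$ with $\lambda$ far to the left of $\Set$ produces a coercive perturbed form on $\Eo$; the Lax--Milgram lemma then gives bijectivity of $X_{-\lambda\iota}$, hence membership in $\Po(\O)$. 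Items 5 and 6 are the two McIntosh theorems, and amount to the same type of translation: the perturbation $\Up=-i\iota$ (resp.\ $\Up=-\lambda\iota$) is the one that renders $\O+\Up$ coercive on $\Eo$ under the cited hypotheses. For the unique choice of inner-product norm on $\Eo$ in each equivalence I would invoke \cite[Theorem~3.8]{RC_CT} (all q-closing norms are equivalent).

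For items 7 and 8 (Schmitz and Schmüdgen) only one implication is claimed. The Schmitz hypothesis produces, by construction, a bounded symmetric form $\Up$ such that $\O+\Up$ is coercive on $\Eo$, and $\Up$ is not required to be proportional to $\iota$, giving $\Up\in\Po(\O)$ directly; this is precisely the feature emphasised in Example \ref{esm_3}. The Schmüdgen hypothesis yields a half-plane numerical range missing $0$ together with coercivity of $\O$ itself, so that $\vartheta\in\Po(\O)$ via Lax--Milgram. The main obstacle will be the bookkeeping needed for each of the eight items: verifying that the scalar-product structure which each cited author puts on $\D$ agrees, up to equivalence of norms, with the one forced by the solvability framework, and identifying the correct perturbation $\Up$ so that $X_\Up$ is bijective. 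Once this dictionary is fixed, each item reduces to a straightforward comparison of definitions using Theorem \ref{th_rapp_risol}, Remark \ref{rem_Po_open}, and Lemma \ref{conn_comp} to move freely among admissible perturbations within a connected component of $\overline{\rn_\O}^c$.
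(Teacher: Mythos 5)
Your overall strategy (unpack each cited theorem and match its hypotheses against Definitions \ref{def_q_chiusa} and \ref{def_solv}) is the right one in spirit, and it is essentially what the paper delegates to the literature: the proof actually given here consists of citations --- point 3 to \cite[Proposition 2.9]{Second}, point 4 to \cite[Proposition 7.1]{RC_CT} together with Lemma \ref{conn_comp}, and the remaining points to \cite[Section 7]{RC_CT} --- plus a short convexity argument for point 8. However, your sketch contains a genuine conceptual error precisely in the cases that matter most. You propose to certify $\Up\in\Po(\O)$ by showing that $\O+\Up$ is \emph{coercive} on $\Eo$ and then applying Lax--Milgram. That works for the sectorial items 3 and 4 (and for item 8), but it cannot work for items 1, 2, 5 and 6: the theorems of Fleige--Hassi--de Snoo, Grubi\v{s}i\'{c}--Kostrykin--Makarov--Veseli\'{c} and McIntosh are precisely about \emph{indefinite, non-semibounded, non-sectorial} forms, for which no scalar shift $-\lambda\iota$ makes $\Re(\O-\lambda\iota)$ bounded below by a multiple of $\noo^2$. (Your description of items 1 and 2 as concerning ``symmetric semibounded'' forms misreads those references; the title of \cite{GKMV} is ``representation theorems for \emph{indefinite} quadratic forms''.) What must be verified instead is the weaker property that $X_\Up:\Eo\to\Eo^\times$ is bijective, and in these indefinite cases bijectivity comes from the specific structural hypotheses of each cited theorem (for instance a boundedly invertible Gram or involution operator in the Krein-space setting), not from coercivity. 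Without this, the forward implications in items 1, 2, 5 and 6 are unproved.

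Two further gaps. In item 3 the hypothesis is only that $\O$ is solvable with respect to \emph{some norm} (not necessarily an inner product) and that $\rn_\O\subseteq[\omega,+\infty)$; nothing is assumed about $\Po(\O)$ containing a multiple of $\iota$. So one must show both that the Banach structure on $\D$ is automatically Hilbertian and that some $-\lambda\iota$ with $\lambda<\omega$ lies in $\Po(\O)$, starting from an arbitrary $\Up\in\Po(\O)$; this is the content of \cite[Proposition 2.9]{Second} and is not addressed by your sketch (Lemma \ref{conn_comp} only lets you move within a connected component of $\ol{\rn_\O}^c$ once you already possess one admissible scalar perturbation there). Finally, a complete proof of any of the eight items requires the precise statements of the cited theorems, which your dictionary never actually writes down; as it stands the argument is a plausible programme rather than a proof.
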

\begin{proof}
	Point 3 is proved in \cite[Proposition 2.9]{Second}. Point 4 is a consequence of \cite[Proposition 7.1]{RC_CT} and Lemma \ref{conn_comp}. The other results are contained in \cite[Section 7]{RC_CT}. Note that if $\O$ satisfies \cite[Theorem 11.3]{Schm} (see also \cite{Lions}) then $|\O(\xi,\xi)|\geq \omega \n{\xi}^2$ for all $\xi\in \D$ and some constant $\omega>0$. Therefore, $0\notin \ol{\rn_\O}$ and since $\rn_\O$ is convex it is contained in a half-plane which excludes $0$.
\end{proof}

\begin{osr}
The sesquilinear forms in Example \ref{esm_1} with $\ol{\{\alpha_n:n\in \N\}}=\C$, in Example \ref{esm_3} and in Example 7.3 of \cite{RC_CT}
%\begin{enumerate}
%	\item $\Oa$ in Example \ref{esm_1} with $\ol{\{\alpha:n\in \N\}}=\C$,
%	\item $\O$  in Example \ref{esm_3},
%	\item $\O$  in Example 7.3 of \cite{RC_CT},
%\end{enumerate}
satisfy Theorem \ref{th_rapp_risol}. But the representation theorems listed in Lemma \ref{th_spec} cannot be used for these forms. 
\end{osr}

\no About the second type representation, Theorem \ref{2_repr_th_2} generalizes also Theorem
4.2 of \cite{FHdeS}, Theorem 2.10 of \cite{GKMV} and Theorem 3.1 of \cite{Schmitz}.

\section{Radon-Nikodym-like representation}
\label{sec:RN}

Theorem 3.8 of \cite{Second} provides another representation of sesquilinear forms with weaker hypothesis. %Indeed, it is only required that a form is q-closed w.r.t. an inner product.
In particular, a sesquilinear form $\O$ on $\D$ is q-closed w.r.t. an inner product if and only if 
\begin{equation}
\label{Rad_Nik_like}
\O(\xi,\eta)=\pin{QH\xi}{H\eta}, \qquad \forall \xi,\eta \in \D,
\end{equation}
where $H$ is a positive self-adjoint operator with domain $D(H)=\D$ and $0\in \rho(H)$, and $Q\in \B(\H)$.

%\begin{teor}%[{\cite[Corollary 3.5, Theorem 3.8]{Second}}]
%	\label{car_Rad_Nik}
%	A sesquilinear form $\O$ on $\D$ is q-closed w.r.t. an inner product if and only if there exist a positive self-adjoint operator $H$, with domain $D(H)=\D$ and $0\in \rho(H)$, and $Q\in \B(\H)$ such that 
%	\begin{equation}
%	\label{Rad_Nik_like}
%	\O(\xi,\eta)=\pin{QH\xi}{H\eta}, \qquad \forall \xi,\eta \in \D.
%	\end{equation}
%	Suppose that \emph{(\ref{Rad_Nik_like})} holds. Then
%	%and that $\O$ is also solvable. Then its associated operator is $T=HQH$
%	%defined in the natural domain $D(T)=\{\xi\in \D: QH\xi \in \D\}$.
%		\begin{enumerate}
%			\item a bounded form $\Up$ with associated operator $B$ belongs to $\Po(\O)$ if and only if $Q+H^{-1}BH^{-1}$ is a bijection of $\H$;
%			\item if $\O$ is also solvable then its associated operator is $T=HQH$
%			defined on the natural domain $D(T)=\{\xi\in \D: QH\xi \in \D\}$.
%		\end{enumerate}	
%\end{teor}

\no We call an expression like (\ref{Rad_Nik_like}) a {\it Radon-Nikodym-like representation} of $\O$. It is never unique (indeed we can act on $Q,H$ by multiplying with scalars) and Lemma 3.7 of \cite{Second} gives a way to obtain this type of representation.

\begin{osr}
	\begin{enumerate}
		\item[(a)] Actually, Theorem 3.8 of \cite{Second}  is another generalization of Kato's second theorem. Indeed, (\ref{Kato2}) is equal to (\ref{Rad_Nik_like}), with $Q$ being the identity operator and $H=T^\mez$.
		\item[(b)]  If $\O$ is a closed sectorial form with vertex $0$ then a Radon-Nikodym-like representation is given by formula (3.5) of \cite[Chapter VI]{Kato}.
		\item[(c)] In \cite{GKMV,Veselic} the authors dealt with sesquilinear forms like (\ref{Rad_Nik_like}). In particular, in \cite{GKMV} $Q$ is symmetric and $0\in \rho(Q)$.
		\item[(d)] The motivation of the name 'Radon-Nikodym-like' is due to a more general context (see Theorem 3.6 and Example 6.2 of \cite{Tp_DB}).
		Previous works on Radon-Nikodym style theorems, in the non-negative case, are \cite{Seb_Tit,Tarc} concerning Lebesgue decomposition of non-negative forms (see also \cite{Simon}). We mention that Theorem 2.2 of \cite{Tarc} and Theorem 3 of \cite{Seb_Tit}, with the so-called singular part null, are Kato's second version theorems in a framework with two non-negative sesquilinear forms. However, in this paper for 'Radon-Nikodym-like representation' we mean also that $D(H)=\D$  in (\ref{Rad_Nik_like}).
	\end{enumerate}
	
\end{osr}

\no Let $\mathcal{S}$ be the family of all q-closed sesquilinear forms on $\D$ w.r.t. to some inner product and $\mathcal{F}$ be the family of all positive self-adjoint operators $H$ with $D(H)=\D$ and $0\in \rho(H)$. By \cite[Lemma 3.7]{Second} we can define a map using the Radon-Nikodym-like representation as $$\mathfrak{b}:\mathcal{S}\times \mathcal{F} \to \B(\H) \qquad\text{and} \qquad\mathfrak{b}(\O,H)=Q.$$
%\begin{align*}
%\mathfrak{b}:\mathcal{S}\times \mathcal{F}\;\; &\to \B(\H) \\
%(\O,H) \; & \mapsto \;\; Q.
%\end{align*}
For a fixed $H\in \mathcal{F}$ we can also define a map as 
$$\mathfrak{b}_H:\mathcal{S}\to \B(\H) \qquad\text{and} \qquad \mathfrak{b}_H(\O)=\mathfrak{b}(\O,H)=Q,$$
where $Q$ is the operator in (\ref{Rad_Nik_like}).
%\begin{align*}
%\mathfrak{b}_H:\mathcal{S} \;\; &\to \;\; \B(\H) \\
%\O \;\; & \mapsto \; \mathfrak{b}(\O,H).
%\end{align*}
The following proposition is an immediate consequence of \cite[Lemma 3.7, Theorem 3.8, Proposition 3.12]{Second}.

\begin{pro}
	For every $H\in \mathcal{F}$, $\mathfrak{b}_H$ establishes an isomorphism between the vector spaces $\mathcal{S}$ and $\B(\H)$. Moreover, for every $\O\in \mathcal{S}$,
	$$
	\mathfrak{b}_H(\O^*)=\mathfrak{b}_H(\O)^* \qquad \mathfrak{b}_H(\Re\O)=\Re\mathfrak{b}_H(\O) \qquad \mathfrak{b}_H(\Im\O)=\Im\mathfrak{b}_H(\O).
	$$ 
\end{pro}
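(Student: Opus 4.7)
The plan is to run everything off the Radon-Nikodym-like representation: by Lemma 3.7 of \cite{Second}, once $H\in \mathcal{F}$ is fixed, every $\O\in \mathcal{S}$ admits a \emph{unique} $Q\in \B(\H)$ such that $\O(\xi,\eta)=\pin{QH\xi}{H\eta}$ for all $\xi,\eta\in \D$. This uniqueness is exactly what makes $\mathfrak{b}_H\colon \mathcal{S}\to \B(\H)$ a well-defined map; all the remaining content of the proposition is then bookkeeping.

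For linearity, I would take $\O_1,\O_2\in \mathcal{S}$ with $\mathfrak{b}_H(\O_i)=Q_i$ and scalars $c_1,c_2\in \C$. The combination $c_1\O_1+c_2\O_2$ is again q-closed (as noted in the remark of Section~\ref{sec:rep_th} for any two q-closed forms) and hence belongs to $\mathcal{S}$, and the obvious computation
$$
(c_1\O_1+c_2\O_2)(\xi,\eta)=\pin{(c_1Q_1+c_2Q_2)H\xi}{H\eta},
$$
combined with uniqueness, yields $\mathfrak{b}_H(c_1\O_1+c_2\O_2)=c_1Q_1+c_2Q_2$. Injectivity is immediate: if $\mathfrak{b}_H(\O)=0$ then $\O$ vanishes identically on $\D\times \D$, so $\O=\vartheta$. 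For surjectivity I would, given $Q\in \B(\H)$, simply define $\O(\xi,\eta):=\pin{QH\xi}{H\eta}$; the sesquilinear form $(\xi,\eta)\mapsto \pin{H\xi}{H\eta}$ is an inner product on $\D$ for which $\O$ is bounded (with bound $\n{Q}_{\B(\H)}$), and so $\O\in \mathcal{S}$ by Theorem 3.8 of \cite{Second}, with $\mathfrak{b}_H(\O)=Q$ by construction.

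The three identities are a direct computation. For the adjoint,
$$
\O^*(\xi,\eta)=\ol{\O(\eta,\xi)}=\ol{\pin{QH\eta}{H\xi}}=\pin{H\xi}{QH\eta}=\pin{Q^*H\xi}{H\eta},
$$
so uniqueness gives $\mathfrak{b}_H(\O^*)=Q^*=\mathfrak{b}_H(\O)^*$. The identities $\mathfrak{b}_H(\Re\O)=\Re\mathfrak{b}_H(\O)$ and $\mathfrak{b}_H(\Im\O)=\Im\mathfrak{b}_H(\O)$ then fall out by combining this with the already-established linearity and the definitions $\Re\O=\mez(\O+\O^*)$, $\Im\O=\frac{1}{2i}(\O-\O^*)$.

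There is no real obstacle: everything reduces to invoking Lemma 3.7 of \cite{Second} (for the well-definedness and uniqueness of $Q$) and Theorem 3.8 of \cite{Second} (to guarantee that every $Q\in \B(\H)$ is in the image, i.e.\ that the form $\pin{QH\cdot}{H\cdot}$ actually lies in $\mathcal{S}$). The only subtle point worth flagging is that $\mathcal{S}$ must be a vector space, which is the content of the remark of Section~\ref{sec:rep_th}.
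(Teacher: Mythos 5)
Your proposal is correct and follows the same route as the paper, which simply declares the proposition an immediate consequence of Lemma 3.7, Theorem 3.8 and Proposition 3.12 of \cite{Second}; you have merely made explicit the uniqueness, linearity, bijectivity and adjoint computations that those citations encapsulate. No gaps.
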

%\no If a sesquilinear form $\O$ has a Radon-Nikodym-like representation (\ref{Rad_Nik_like}) then $\O^*$, $\Re \O$ and $\Im \O$ have Radon-Nikodym-like representations, respectively, 
%$$
%\O^*(\xi,\eta)=\pin{Q^*H\xi}{H\eta}, \qquad \forall \xi,\eta \in \D,
%$$
%$$
%\Re \O(\xi,\eta)=\pin{(\Re Q)H\xi}{H\eta}, \qquad \forall \xi,\eta \in \D,
%$$
%$$
%\Im \O(\xi,\eta)=\pin{(\Im Q)H\xi}{H\eta}, \qquad \forall \xi,\eta \in \D.
%$$
%If $\O'$ is a q-closed sesquilinear form w.r.t an inner product, then there exists $Q'\in \B(\H)$ such that 
%$$
%\O'(\xi,\eta)=\pin{Q'H\xi}{H\eta}, \qquad \forall \xi,\eta \in \D.
%$$
%Therefore, $\O+\O'$ has Radon-Nikodym-like representation
%$$
%(\O+\O')(\xi,\eta)=\pin{(Q+Q')H\xi}{H\eta}, \qquad \forall \xi,\eta \in \D.
%$$

\begin{osr}
	\label{rem_rn_Q_O}
Let $\rn$ be one of the following subsets $(0,+\infty),[0,+\infty)$, $\R$, $\{\lambda\in \C:\Re\lambda \geq 0\}$ or $\{\lambda\in \C:\arg(\lambda)\leq \theta\}$, with $0\leq \theta < \frac{\pi}{2}$. Clearly, $\rn_Q\subseteq \rn$ if, and only if, $\rn_\O\subseteq \rn$.
%$\O$ has numerical range in $\rn$ if, and only if, $Q$ has numerical range in $\rn$.\\	
\end{osr}

\begin{cor}
	Let $\O$ be a q-closed sesquilinear form on $\D$ with Radon-Nikodym-like representation \emph{(\ref{Rad_Nik_like})}. 
	\begin{enumerate}
		\item If $0\notin \ol{\rn_Q}$, then $\O$ is solvable and $\vartheta\in \Po(\O)$.
		\item If $\rn_{\Re Q} \subseteq [\gamma,+\infty)$, with $\gamma >0$, then $\O$ is a closed sectorial form in Kato's sense.
	\end{enumerate}	
\end{cor}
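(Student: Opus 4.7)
The plan is to unpack the Radon--Nikodym-like representation $\O(\xi,\eta)=\pin{QH\xi}{H\eta}$ and use three facts throughout: $H:\D\to\H$ is a bijection (since $D(H)=\D$ and $0\in\rho(H)$), $Q\in\B(\H)$ is a bounded Hilbert-space operator whose numerical range is controlled by the hypotheses, and the ``all and only norms making $\O$ q-closed are equivalent to $\noo$'' bullet in Section~\ref{sec:rep_th} permits replacing $\noo$ by the graph norm of $H$, turning $\Eo$ into the Hilbert space with inner product $\pin{H\cdot}{H\cdot}+\pin{\cdot}{\cdot}$.

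For Part~(1), I would first show that $0\notin\ol{\rn_Q}$ forces $Q$ to be invertible in $\B(\H)$: convexity of $\rn_Q$ (Toeplitz--Hausdorff) combined with Hahn--Banach produces $\varphi\in\R$ and $c>0$ with $\Re(e^{i\varphi}\pin{Q\xi}{\xi})\geq c\n{\xi}^2$, and Lax--Milgram then yields bijectivity of $e^{i\varphi}Q$, hence of $Q$. With $Q$ invertible I verify the two defining conditions of $\Po(\O)$ with $\Up=\vartheta$: for non-degeneracy, $\O(\xi,\cdot)\equiv 0$ on $\D$ combined with surjectivity of $H$ onto $\H$ gives $QH\xi=0$, and injectivity of $Q$ and $H$ forces $\xi=0$; for the range condition, once $\noo$ is taken as the graph norm of $H$, a Riesz argument writes any $\L\in\Eo^\times$ as $\pin{\L}{\eta}=\pin{\zeta}{H\eta}$ for some $\zeta\in\H$, and then $\xi:=H^{-1}Q^{-1}\zeta\in\D$ satisfies $\O(\xi,\eta)=\pin{\L}{\eta}$. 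This proves $\vartheta\in\Po(\O)$, whence solvability by Definition~\ref{def_solv}.

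For Part~(2), the hypothesis reads $\Re Q\geq\gamma I$ as a bounded self-adjoint operator, so direct substitution gives $\Re\O(\xi,\xi)\geq\gamma\n{H\xi}^2\geq(\gamma/\n{H^{-1}}^2)\n{\xi}^2$ and $|\Im\O(\xi,\xi)|\leq\n{\Im Q}\n{H\xi}^2\leq(\n{\Im Q}/\gamma)\Re\O(\xi,\xi)$, which is sectoriality with vertex $0$ and semi-angle $\arctan(\n{\Im Q}/\gamma)<\pi/2$. Closedness amounts to showing $(\D,\Re\O+\pint)$ is complete, and from $\gamma I\leq\Re Q\leq\n{\Re Q}I$ together with $0\in\rho(H)$ the form-norm $(\Re\O(\xi,\xi)+\n{\xi}^2)^{1/2}$ is two-sided equivalent to the graph norm of $H$, under which $\D=D(H)$ is already complete. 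The main obstacle is the range condition in Part~(1), which requires an explicit description of $\Eo^\times$; switching to the graph norm of $H$ makes both $\Eo^\times$ and the operator $X_\vartheta$ fully transparent, and after that the argument reduces to composing the invertibilities of $H$ and $Q$.
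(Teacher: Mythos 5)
Your proof is correct, but it takes a genuinely more self-contained route than the paper's. For part (1) the paper simply cites Halmos (Problem 214, i.e.\ $\sigma(Q)\subseteq\ol{\rn_Q}$) to get $0\in\rho(Q)$ and then invokes Theorem 3.8 of the reference on the second representation theorem, which already packages the equivalence between $0\in\rho(Q)$ and solvability with $\vartheta\in\Po(\O)$; you instead reprove the invertibility of $Q$ (Toeplitz--Hausdorff plus a rotation and coercivity) and verify the two defining conditions of $\Po(\O)$ by hand, using that the graph norm of $H$ is an admissible norm so that $\Eo^\times$ becomes transparent via Riesz representation — a legitimate unpacking of the cited theorem, and your appeal to the equivalence of all admissible norms is exactly the point needed to make the computation of $\Eo^\times$ in the graph norm of $H$ rigorous. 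For part (2) the divergence is larger: the paper stays inside its own machinery, applying part (1) to $\Re\O$ (whose Radon--Nikodym operator is $\Re Q$), transferring the sector from $\rn_Q$ to $\rn_\O$ via Remark \ref{rem_rn_Q_O}, perturbing $\vartheta$ to $-\lambda\iota$ via Remark \ref{rem_Po_open}, and concluding through the characterization in Lemma \ref{th_spec}; you instead verify Kato's definitions directly from the representation: the two-sided bound $\gamma\n{H\xi}^2\le\Re\O(\xi,\xi)\le\n{\Re Q}\,\n{H\xi}^2$ together with $0\in\rho(H)$ gives sectoriality with vertex $0$ and semi-angle $\arctan(\n{\Im Q}/\gamma)$, and closedness because the form norm is equivalent to the graph norm of $H$, under which $D(H)=\D$ is complete. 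Your argument is more elementary and makes all constants explicit, at the cost of redoing work the paper delegates to its references; the paper's version is shorter but chains three auxiliary statements and the solvability characterization of sectorial forms.
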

\begin{proof}
%Suppose that $Q$ is normal. 
Suppose $0\notin \ol{\rn_Q}$. Then $0\in \rho(Q)$ (see \cite[Problem 214]{Halmos}) and $\O$ is solvable with $\vartheta\in \Po(\O)$ by \cite[Theorem 3.8]{Second}.\\
%In particular, if $\rn_{\Re Q} \subseteq [\gamma,+\infty)$, with $\gamma >0$, then $\Re \O$ is a solvable form with $\vartheta\in \Po(\Re\O)$. Thus, $\Re\O$ is closed semi-bouded form in Kato's sense by Theorem \ref{th_spec}.\\ %As in the remark above, $\Re\O$ has numerical range in $(0,+\infty)$ and $-\lambda \in \Po(\Re \O)$ for Remark \ref{rem_Po_open}. 
%(in this case, $\Re\O$ is a closed semi-bounded form in Kato's sense by Theorem \ref{th_spec}). 
In particular, if $\rn_{\Re Q} \subseteq [\gamma,+\infty)$, with $\gamma >0$, then $\O$ is solvable with $\vartheta\in \Po(\Re\O)$. Moreover, taking into account that $\rn_Q$ is a bounded subset, then $\rn_Q$ is contained in a sector $\Set=\{\lambda\in \C:\arg(\lambda)\leq \theta\}$, with $0\leq \theta < \frac{\pi}{2}$. As it was mentioned in Remark \ref{rem_rn_Q_O}, $\O$ has numerical range in $\Set$, and there exists $\lambda <0$ such that $-\lambda\iota \in \Po(\Re \O)$ by Remark \ref{rem_Po_open}.  %The resolvent set $\rho(T)$ contains $0$, because $\vartheta\in \Po(\O)$; hence, there exists a negative number $\lambda$ in $\rho(T)$ and, as a consequence, $-\lambda \iota \in \Po(\O)$.
Finally, Theorem \ref{th_spec} implies that $\O$ is sectorial closed in Kato's sense.
\end{proof}

%\no Radon-Nikodym-like representations are involved in a criterion for establishing if a solvable form is hyper-solvable (Theorem 4.10 of \cite{Second}).

\subsection*{Acknowledgements}
This work was supported 
%by INDAM-GNAMPA 
by the Gruppo Nazionale per l’Analisi Matematica,
la Probabilità e le loro Applicazioni (GNAMPA) of the Istituto Nazionale di Alta Matematica
(INdAM)
(project 'Problemi spettrali e di rappresentazione in quasi *-algebre di operatori'  2017).

\end{document}